\def\Re{{\sf Re}\,}
\newtheorem{theorem}{Theorem}[section]
\newtheorem{proposition}[theorem]{Proposition}
\newtheorem{question}[theorem]{Question}
\theoremstyle{definition}
\newtheorem{definition}[theorem]{Definition}
\numberwithin{equation}{section}
\newcommand{\C}{\mathbb C}
\newcommand{\Cn}{\mathbb C^n}
\newcommand{\N}{\mathbb N}
\newcommand{\B}{\mathbb B}
\newcommand{\Bn}{\mathbb B^n}
\newcommand{\D}{\mathbb D}
\begin{document}
\begin{abstract}
In this paper we study the class of ``shearing'' holomorphic maps of the unit ball of the form $(z,w)\mapsto (z+g(w), w)$. Besides general properties, we use such maps to construct an example of a normalized univalent map of the ball onto a Runge domain in $\C^n$ which however cannot be embedded into a Loewner chain whose range is $\C^n$.
\end{abstract}

\title[Shearing and embedding]{Shearing maps and a Runge map of the unit ball which does not embed  into a Loewner chain with range $\C^n$}

\dedicatory{Dedicated to the memory of  our friend Prof. Gabriela Kohr}

\author[F. Bracci]{Filippo Bracci$^\dag$}
\address{F. Bracci: Dipartimento di Matematica, Universit\`a di Roma ``Tor Vergata", Via della Ricerca
Scientifica 1, 00133, Roma, Italia.} \email{fbracci@mat.uniroma2.it}

\author[P. Gumenyuk]{Pavel Gumenyuk}
\address{Department of Mathematics, Politecnico di Milano, via E. Bonardi 9, 20133, Milan, Italy} \email{pavel.gumenyuk@polimi.it}

%\subjclass[2010]{}
\keywords{Loewner chains; geometric function theory; embedding problem}

\thanks{$^1$Partially supported by PRIN 2017 Real and Complex Manifolds: Topology, Geometry and holomorphic
dynamics, Ref: 2017JZ2SW5, by GNSAGA of INdAM and by the MIUR Excellence Department Project awarded
to the Department of Mathematics, University of Rome Tor Vergata, CUP E83C18000100006.}
\maketitle

\tableofcontents

\newcommand{\di}[2]{d#1(#2)}

\section{Introduction}

Let $\Bn:=\{z\in \C^n: \|z\|<1\}$ be the Euclidean unit ball in $\C^n$ and $\D:=\B^1$.
Let
$$H(\Bn):=\big\{f:\Bn\to\Cn:\,f\mbox{ is  holomorphic}\big\}.$$ 
As usual, we endow $H(\Bn)$ with the topology of uniform convergence on compacta. 
We say that $f\in H(\B^n)$ is normalized if $f(0)=0$ and $\di{f}{0}={\sf Id}$.
Let
$$S(\Bn):=\big\{f\in H(\Bn):\,f\mbox{ is  normalized univalent on }
\B^n\}.
$$
Also, let
\[
U_0(\C^n):=\{f:\C^n\to \C^n \hbox{\ univalent}, f(0)=0, \di{f}{0}={\sf Id}\},
\]
and
\[
A_0(\C^n):=\{f\in U_0(\C^n): f \hbox{\ is surjective}\}.
\]
Note that $A_0(\C^n)$ consists of all the automorphisms of $\C^n$ tangent to the identity at the origin, and that for $n>1$ such a group is huge: in fact, for every jet of the form
\[
{\sf Id}+\sum_{j=2}^kP_j,
\]
where $k\in \N$, $k\geq 2$ and $P_j$ is a homogeneous polynomial vector of degree $j$, there exists  $F\in A_0(\C^n)$ such that $F={\sf Id}+\sum_{j=2}^kP_j+o(k)$ (see \cite{Fo}). Also, clearly,  $A_0(\C^n)\circ S(\B^n)=S(\B^n)$.

Clearly, $A_0(\C)=U_0(\C)=\{\sf Id\}$, while, for $n>1$,  $A_0(\C^n)\subsetneq U_0(\C^n)$, due to the existence of the so-called Fatou-Bieberbach domains.

As it is well known, $S(\D)$ is compact in $H(\D)$. The compactness of a family of holomorphic maps is a strong property, because, for instance, it allows growth estimates, estimates on the differentials, existence of support points, Koebe's type theorem, Bieberbach's conjecture, and so on.

In higher dimension the family $S(\Bn)$ is not compact. (For instance, for every $A>0$, the restriction to $\Bn$ of the automorphism $(z,w)\mapsto (z+Aw^2, w)$ belongs to $S(\Bn)$ but, for $A\to+\infty$ there is no limit point.) It is then natural to ask:
\begin{question}
Is there a ``natural'' compact set $\mathcal K\subseteq S(\B^n)$ such that
\[
A_0(\C^n)\circ \mathcal K=S(\B^n)?
\]
\end{question}

For $n=1$, clearly $\mathcal K=S(\D)$. For $n>1$, several natural compact subclasses of $S(\B^n)$ has been introduced. We need some preliminaries in order to define one of the most natural.

 \begin{definition}
\label{Lch}
A family $(f_t)_{t\ge 0}$ of holomorphic
mappings on $\B^n$ is called a {\sl Loewner chain} if $\{e^{-t}f_t\}_{t\ge
0}\subseteq S(\Bn)$ and $f_s(\Bn)\subseteq f_t(\Bn)$, $0\le s\le
t<\infty$.

If, in addition, $\{e^{-t}f_t\}_{t\ge 0}$ is a normal family, then
$(f_t)_{t\ge0}$ is called a {\sl normal Loewner chain}.
\end{definition}

For a Loewner chain $(f_t)_{t\ge 0}$ the set
\[
R(f_t):=\bigcup_{t\ge0}f_t(\Bn)
\]
is called the {\sl Loewner range} of $(f_t)_{t\ge 0}$.\medskip

We say that a mapping $f\in S(\Bn)$ \textsl{embeds into a Loewner chain}
$(f_t)_{t\ge0}$ if $f_0=f$. Let
\begin{flalign*}
& \mbox{~}\hskip-7em &  S^0(\Bn)&:=\big\{f\in S(\Bn): f\mbox{ embeds into a normal Loewner
chain }(f_t)_{t\ge0}\big\},\\
& \mbox{~}\hskip-7em &  \hskip-4em S^1(\Bn)&:=\big\{f\in S(\Bn): f\mbox{ embeds into a Loewner
chain }(f_t)_{t\ge0}\big\},\\
& \text{\,\hskip\parindent and} \hskip-7em &
S_R(\Bn)&:=\big\{f\in S(\Bn): f(\Bn)\mbox{ is a Runge
domain}\big\}.
\end{flalign*}

For $n=1$, by the so-called Pommerenke's embedding theorem and Runge's theorem, $S(\D)=S^0(\D)=S^1(\D)=S_R(\D)$ (see \cite{Pom}). This fact suggested to Gabriela Kohr \cite{Gabi} and I.\,Graham, H.\,Hamada and G.\,Kohr \cite{GHK-CJM} (see also \cite{GK}) to define the remarkable class $S^0(\B^n)$. Among many other properties, they showed that $S^0(\B^n)$ is compact, and every normalized convex or starlike mapping of $\B^n$ is contained in $S^0(\B^n)$. Moreover, we have

\begin{theorem}[Graham, Kohr,  Pfaltzgraff \cite{GKP}] \label{t1}
Let $(f_t)_{t\ge0}$ be a Loewner chain. Then there exist $\phi\in U_0(\C^n)$ and a normal Loewner chain
$(g_t)_{t\ge0}$ such that $f_t=\phi\circ g_t$, for all $t\ge0.$ In
particular,
$$S^1(\Bn)=U_0(\C^n)\circ S^0(\B^n).$$
\end{theorem}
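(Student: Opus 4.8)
The plan is to pass from the given Loewner chain to its associated evolution family, build the canonical \emph{normal} Loewner chain sharing that evolution family, and then recognize the factor $\phi$ as the comparison map between the two chains. First I would introduce the two-parameter family of transition maps $v_{s,t}:=f_t^{-1}\circ f_s$, which makes sense for $0\le s\le t$ precisely because the definition of a Loewner chain guarantees $f_s(\Bn)\subseteq f_t(\Bn)$ while each $f_t$ is univalent. These $v_{s,t}$ are holomorphic self-maps of $\Bn$ fixing the origin, with $\di{v_{s,t}}{0}=e^{-(t-s)}{\sf Id}$ (since $\di{f_t}{0}=e^t{\sf Id}$), satisfying $v_{t,t}={\sf Id}$ and the semigroup-type identity $v_{s,t}\circ v_{u,s}=v_{u,t}$ for $u\le s\le t$. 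This \emph{evolution family} is the key invariant: it does not remember which Loewner chain produced it.

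The analytic heart of the argument, and the step I expect to be the main obstacle, is to produce a normal Loewner chain $(g_t)_{t\ge0}$ whose evolution family is exactly $(v_{s,t})$, together with the fact that its Loewner range is all of $\C^n$. The natural construction is $g_s:=\lim_{t\to+\infty}e^{t}v_{s,t}$; one must verify that this limit exists locally uniformly, that each $g_s$ is univalent with $\di{g_s}{0}=e^s{\sf Id}$, that $(g_s)$ is a Loewner chain with transition maps $v_{s,t}$, and that $\{e^{-t}g_t\}$ is a normal family. The identity $R(g_t)=\C^n$ would then follow from normality: the family $\{e^{-t}g_t\}\subseteq S(\Bn)$ being relatively compact with univalent limits, the quantity $\mathrm{dist}\big(0,\partial(e^{-t}g_t)(\Bn)\big)$ is bounded below by a positive constant $c$ uniformly in $t$, so $g_t(\Bn)\supseteq e^t c\,\Bn$ exhausts $\C^n$ as $t\to+\infty$.

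Granting this, the factorization is formal. I would set $\phi_t:=f_t\circ g_t^{-1}$, defined on $g_t(\Bn)$, and compute, using $f_s=f_t\circ v_{s,t}$ and $g_s=g_t\circ v_{s,t}$,
\[
\phi_s=f_s\circ g_s^{-1}=(f_t\circ v_{s,t})\circ(g_t\circ v_{s,t})^{-1}=f_t\circ g_t^{-1}=\phi_t,
\]
so $\phi_t\equiv\phi$ is independent of $t$. Since the domains $g_t(\Bn)$ increase to $R(g_t)=\C^n$ and the various $\phi_t$ agree on overlaps, this common value $\phi$ is a well-defined univalent map on $\C^n$ with $\phi(0)=0$ and $\di{\phi}{0}=\di{f_t}{0}\circ(\di{g_t}{0})^{-1}={\sf Id}$, i.e. $\phi\in U_0(\C^n)$, and $f_t=\phi\circ g_t$ for every $t$.

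Finally, for the displayed identity I would argue both inclusions. If $f\in S^1(\Bn)$, I pick a Loewner chain with $f_0=f$, factor $f_t=\phi\circ g_t$ as above, and note that $g_0\in S^0(\Bn)$ (it embeds into the normal chain $(g_t)$), whence $f=\phi\circ g_0\in U_0(\C^n)\circ S^0(\Bn)$. Conversely, if $g\in S^0(\Bn)$ embeds into a normal chain $(g_t)$ and $\phi\in U_0(\C^n)$, then $(\phi\circ g_t)$ is again a Loewner chain: univalence of $\phi$ preserves the inclusions $g_s(\Bn)\subseteq g_t(\Bn)$, and $e^{-t}(\phi\circ g_t)\in S(\Bn)$ because $\di{(\phi\circ g_t)}{0}=e^t{\sf Id}$. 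Hence $\phi\circ g=\phi\circ g_0\in S^1(\Bn)$, completing the proof.
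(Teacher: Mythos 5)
The paper itself offers no proof of this statement --- it is quoted from \cite{GKP} --- so I am comparing your attempt against the argument in that reference. Your architecture is exactly the standard one: pass to the transition maps $v_{s,t}=f_t^{-1}\circ f_s$, build the canonical normal chain $g_s=\lim_{t\to+\infty}e^{t}v_{s,t}$, glue the maps $\phi_t=f_t\circ g_t^{-1}$ along the exhaustion $g_t(\Bn)\nearrow R(g_t)=\C^n$, and then read off both inclusions of $S^1(\Bn)=U_0(\C^n)\circ S^0(\Bn)$. All the soft steps you actually carry out are correct: the cocycle identity $v_{s,t}\circ v_{u,s}=v_{u,t}$, the computation showing $\phi_s=\phi_t$ on $g_s(\Bn)$, the normalization $\phi(0)=0$, $\di{\phi}{0}={\sf Id}$, the argument that a normal chain has Loewner range $\C^n$ (this is \cite[Prop.~2.1]{ABW}, and your lower-bound-on-$\mathrm{dist}(0,\partial(e^{-t}g_t)(\Bn))$ argument is the right one), and the verification that $\phi\circ g_t$ is again a Loewner chain.

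The genuine gap is the step you explicitly set aside: the existence of the locally uniform limit $g_s=\lim_{t\to+\infty}e^{t}v_{s,t}$ and the normality of the resulting chain. That is not a technical verification --- it is the entire analytic content of the theorem; everything else in your write-up is a formal reduction to it. To prove it one must first show that $t\mapsto v_{s,t}(z)$ is locally Lipschitz (no regularity in $t$ is assumed in Definition~\ref{Lch}, so this already takes work) and satisfies a Loewner ODE $\partial_t v_{s,t}(z)=-h(v_{s,t}(z),t)$ for a normalized Herglotz vector field; then use compactness of the Carath\'eodory-type class to get a uniform estimate $\|h(z,t)-z\|\le C(r)\|z\|^2$ for $\|z\|\le r$, together with the growth bound $\|v_{s,t}(z)\|\big/(1-\|v_{s,t}(z)\|)^2\le e^{s-t}\|z\|\big/(1-\|z\|)^2$, to conclude that $\partial_t\big(e^{t}v_{s,t}(z)\big)=e^{t}\big(v_{s,t}(z)-h(v_{s,t}(z),t)\big)=O(e^{-t})$ locally uniformly, whence $t\mapsto e^{t}v_{s,t}(z)$ is Cauchy. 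Without some such argument the limit might a priori fail to exist (one only gets subsequential limits from normality, and distinct limit points are not obviously excluded), and the identity $g_s=g_t\circ v_{s,t}$ on which your gluing rests would have no content. As it stands, your proposal is a correct and well-organized reduction of the theorem to its main lemma, not a proof of the theorem.
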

However, although the above theorem is very tempting to make guess that one can take $\mathcal K=S^0(\B^n)$, it turns out that
\[
A_0(\C^n)\circ S^0(\B^n)\subsetneq S^1(\B^n).
\]
Indeed,  let $(g_t)$ be a normal Loewner chain. By \cite[Prop. 2.1]{ABW}, the Loewner range of $(g_t)$ is $\C^n$ and thus, by a theorem of Docquier-Grauert \cite{DocG}, every $f\in S^0(\B^n)$ has Runge image in $\C^n$. Therefore, also every $h\in A_0(\C^n)\circ S^0(\B^n)$ has Runge image in $\C^n$. However, there exists $g\in S^1(\Bn)$ such that $g(\B^n)$ is not Runge (see \cite[Example 2.2]{ABW}).

Constructing an example of a biholomorphic image of the ball $\B^3$ whose image is not Runge in any bigger domain, J. E. Forn\ae ss and E. Wold \cite{FW} recently proved that
\[
S(\B^3)\neq S^1(\B^3),
\]
which shows, in particular, that, at least for $n\geq 3$, the compact class $\mathcal K$ has to be rather exotic.

As the above discussion shows, the main issues are due to Runge-ness property. Thus one might ask what happens if we restrict to the class  $S_R(\B^n)$.

Since $S^0(\B^n)\subset S_R(\B^n)$, it is thus natural to ask (cfr. Question Q1) in \cite{ABW}):

\begin{question}
Is it true that $S_R(\B^n)=A_0(\C^n)\circ S^0(\B^n)$?
\end{question}

The main result of this note is the following:

\begin{theorem}\label{main}
There exists a map $f\in S_R(\B^2)$ that cannot be embedded into a Loewner chain whose range is $\C^2$.  In particular, $A_0(\C^2)\circ S^0(\B^2)\subsetneq S_R(\B^2)$.
\end{theorem}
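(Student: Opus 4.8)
The plan is first to reformulate non-embeddability in terms of the composition structure of Theorem \ref{t1}. If $f$ embeds into a Loewner chain $(f_t)_{t\ge0}$ with $R(f_t)=\C^2$, then by Theorem \ref{t1} we may write $f_t=\phi\circ g_t$ with $\phi\in U_0(\C^2)$ and $(g_t)_{t\ge0}$ a normal Loewner chain. Since a normal Loewner chain has Loewner range $\C^2$ (by \cite[Prop. 2.1]{ABW}), injectivity of $\phi$ gives $\C^2=R(f_t)=\bigcup_{t\ge0}\phi(g_t(\B^2))=\phi\big(R(g_t)\big)=\phi(\C^2)$, forcing $\phi\in A_0(\C^2)$ and hence $f=f_0=\phi\circ g_0\in A_0(\C^2)\circ S^0(\B^2)$. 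Conversely, every element of $A_0(\C^2)\circ S^0(\B^2)$ embeds into such a chain. Thus it suffices to produce $f\in S_R(\B^2)$ with $f\notin A_0(\C^2)\circ S^0(\B^2)$; the displayed strict inclusion then follows at once, since $A_0(\C^2)\circ S^0(\B^2)\subseteq S_R(\B^2)$ as recalled in the introduction.

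I would look for $f$ among the shearing maps $f(z,w)=(z+g(w),w)$, where $g\colon\D\to\C$ is holomorphic with $g(0)=g'(0)=0$; such an $f$ is automatically normalized and injective on $\B^2$, so $f\in S(\B^2)$. Writing the image as $\Omega:=f(\B^2)=\{(\zeta,w):|w|<1,\ |\zeta-g(w)|^2<1-|w|^2\}$, I would establish $f\in S_R(\B^2)$ by proving that $\Omega$ is polynomially convex: for a compact set $K\subset\Omega$ the coordinate $w$ is a polynomial, so the polynomial hull $\widehat K$ lies over $\{|w|\le\rho\}$ with $\rho:=\max_K|w|<1$; on this disk $g$ is a locally uniform limit of its Taylor polynomials $g_N$, and since polynomial hulls respect the suprema of polynomials, one controls $\sup_{\widehat K}|\zeta-g(w)|$ by $\sup_K|\zeta-g(w)|$, yielding $\widehat K\subset\Omega$. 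This is exactly the kind of stability statement I expect to isolate among the general properties of shears. Note that if $g$ were entire then $f$ would extend to an element of $A_0(\C^2)$, hence $g$ must be taken holomorphic on $\D$ but not entire.

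The heart of the argument is to choose $g$ so that $f\notin A_0(\C^2)\circ S^0(\B^2)$. Suppose to the contrary that $f=\phi\circ h$ with $\phi\in A_0(\C^2)$ and $h\in S^0(\B^2)$. The strategy is to exploit the compactness of $S^0(\B^2)$: it provides uniform growth, covering and distortion bounds and makes $\{h:h\in S^0(\B^2)\}$ a normal family, so that the data extracted from $h$ — for instance $w\mapsto$ the first coordinate of $\phi(h(0,w))$, which equals $g(w)$ — ranges, as $h$ varies over $S^0(\B^2)$, over a relatively compact subset of $H(\D)$. Against this one plays the rigidity of the shear $f$, whose fibers $\{w=\mathrm{const}\}$ are rigidly translated disks; this rigidity should force $\phi$ into a normal form adapted to the shear (the main technical lemma to be established among the properties of shearing maps), after which the problem reduces to comparing $g$, modulo functions extending holomorphically to all of $\C$, with a \emph{compact} family of admissible ``shear parts'' coming from $S^0(\B^2)$. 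I would then engineer $g$ — for instance a lacunary series with radius of convergence exactly $1$ whose boundary behavior is designed to exceed, modulo entire corrections, anything this compact family can realize — so that no such decomposition exists. The main obstacle, and precisely the place where the shearing machinery is indispensable, is controlling \emph{arbitrary} (non-shear) automorphisms $\phi\in A_0(\C^2)$: because $A_0(\C^2)$ contains maps of arbitrarily fast growth, a naive radial growth comparison of $g$ with $\phi\circ h$ cannot succeed, and one must instead combine the full rigidity of the shear with the compactness of $S^0(\B^2)$ in order to rule out every $\phi$ simultaneously.
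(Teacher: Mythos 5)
Your opening reduction is correct and is exactly the paper's: by Theorem \ref{t1} together with \cite[Prop.~2.1]{ABW}, embedding into a chain with range $\C^2$ is equivalent to membership in $A_0(\C^2)\circ S^0(\B^2)$, so it suffices to exhibit $f\in S_R(\B^2)\setminus A_0(\C^2)\circ S^0(\B^2)$. Your Runge-ness argument for shears is also essentially sound (the paper gets it more cheaply from the closedness of $S_R(\B^2)$ and the fact that the truncated shears are automorphisms, i.e.\ Proposition \ref{p4}). The problem is the third paragraph: the actual non-embeddability argument is never carried out. You do not specify $g$, the ``rigidity lemma'' forcing $\phi$ into a normal form adapted to the shear is not stated or proved (and no such normalization is available in the paper or obviously true), and the compactness you invoke for the family $w\mapsto(\phi\circ h)(0,w)$ only holds for \emph{fixed} $\phi$ --- as $\phi$ ranges over $A_0(\C^2)$ there is no normality left to exploit. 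As written, the core of the theorem is a plan, not a proof.

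More pointedly, you explicitly dismiss the route that actually works: you write that ``a naive radial growth comparison of $g$ with $\phi\circ h$ cannot succeed'' because automorphisms can grow arbitrarily fast. The paper's key idea is precisely to defeat this objection by choosing $g(w)=w^2\exp\bigl(i/(1-w)^3\bigr)$, which is \emph{bounded} (of modulus $r^2$) along the real radius $r\in(0,1)$ while its derivative blows up like $(1-r)^{-4}$. Boundedness of $r\mapsto f(0,r)$ forces the points $\Phi^{-1}(f(0,r))$ into a fixed compact set $K$, on which $\|\di{\Phi}{\cdot}\|$ is bounded by some constant $C$; the chain rule then gives $\|\di{(\Phi^{-1}\circ f)}{0,r}\|\ge C^{-1}\|\di{f}{0,r}\|\gtrsim (1-r)^{-4}$, contradicting the growth estimate $\|\di{h}{z}\|\le(1+\sqrt r)^2/(1-r)^3$ for $h\in S^0(\B^2)$ (Proposition \ref{dif-ineq}, which is the other ingredient your proposal lacks). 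So the arbitrary growth of $\Phi$ is irrelevant: only its differential on a compact set enters. Without this choice of $g$ and without the $S^0$ distortion bound, your proposal has no mechanism to derive a contradiction.
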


This result can be easily generalized to any $n\geq 3$; hence, for $n>1$, $A_0(\C^n)\circ S^0(\B^n)\subsetneq S_R(\B^n)$.
The proof relies on the construction of an example of the form $\B^2\ni (z,w)\mapsto (z+g(w), w)$, where $g:\D \to \C$ is holomorphic and $g(0)=g'(0)=0$. Thus, in Section~2 we study general properties of the class of such maps, which we call \textsl{shearing maps}. In Section~3, we prove first the following new growth estimate for the differential of maps in $S^0(\B^n)$. For a linear map $L:\C^n\to\C^n$, we denote by $\|L\|$ the operator norm of~$L$.
\begin{proposition}
\label{dif-ineq}
Let $f\in S^0(\B^n)$, $n\ge1$. Then for every $r\in(0,1)$ the following
inequality holds:
$$\big\|\di{f}{z}\big\|\le\frac{(1+\sqrt{r})^2}{(1-r)^3},\quad \|z\|\le
r.$$
\end{proposition}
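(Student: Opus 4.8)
The plan is to combine the classical growth theorem for $S^0(\Bn)$ with a one–variable Cauchy integral estimate taken along a carefully chosen complex line through $z$. Recall first the growth theorem for maps with parametric representation (see, e.g., \cite{GK}): for every $g\in S^0(\Bn)$ one has $\|g(\zeta)\|\le \|\zeta\|/(1-\|\zeta\|)^2$ for all $\zeta\in\Bn$. Since the right–hand side of the asserted inequality is increasing in $r$, it suffices to treat $\|z\|=r\in(0,1)$ and to bound $\big\|\di{f}{z}\,v\big\|$ for an arbitrary unit vector $v\in\Cn$ (the case $z=0$ being trivial, as $\di{f}{0}={\sf Id}$).

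Fix such a $v$ and decompose $z=z_c+\beta v$, where $\beta:=\langle z,v\rangle$, $b:=|\beta|\le r$, and $z_c:=z-\beta v\perp v$, so that $\|z_c\|^2=r^2-b^2$. Then I would consider the holomorphic map $G(\eta):=f(z_c+\eta v)$, defined for $|\eta|<R:=\sqrt{1-r^2+b^2}$, and note that $\di{f}{z}\,v=G'(\beta)$ while, crucially, $\|z_c+\eta v\|=\sqrt{\|z_c\|^2+|\eta|^2}$ because $z_c\perp v$; hence $\|z_c+\eta v\|$ is \emph{constant} on every circle $|\eta|=\sigma$. The key step is to choose $\sigma$ so that this circle lies on the sphere of radius $\sqrt r$, i.e. to take $\sigma^2:=r-r^2+b^2$, which forces $\|z_c+\eta v\|=\sqrt r$ there; one checks $b<\sigma<R$ for $r\in(0,1)$. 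Representing $G'(\beta)$ by the Cauchy integral over $|\eta|=\sigma$ and bounding $\|G(\eta)\|$ by the growth theorem (now a constant factor $\sqrt r/(1-\sqrt r)^2$) yields
\[
\big\|\di{f}{z}\,v\big\|\le\frac{1}{2\pi}\,\frac{\sqrt r}{(1-\sqrt r)^2}\oint_{|\eta|=\sigma}\frac{|d\eta|}{|\eta-\beta|^2}=\frac{\sqrt r}{(1-\sqrt r)^2}\,\frac{\sigma}{\sigma^2-b^2},
\]
where I have used the elementary identity $\oint_{|\eta|=\sigma}|\eta-\beta|^{-2}\,|d\eta|=2\pi\sigma/(\sigma^2-b^2)$, valid for $b<\sigma$.

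To conclude, observe that $\sigma^2-b^2=r(1-r)$ is independent of the direction, while $\sigma=\sqrt{r(1-r)+b^2}\le\sqrt r$ since $b\le r$; therefore
\[
\big\|\di{f}{z}\,v\big\|\le\frac{\sqrt r}{(1-\sqrt r)^2}\,\frac{\sqrt r}{r(1-r)}=\frac{1}{(1-\sqrt r)^2(1-r)}=\frac{(1+\sqrt r)^2}{(1-r)^3},
\]
using $(1-r)=(1-\sqrt r)(1+\sqrt r)$ in the last equality. Taking the supremum over unit vectors $v$ gives the stated bound on $\big\|\di{f}{z}\big\|$. The substantive ingredients are the reduction to one variable along a slice on which the growth bound becomes \emph{constant}, together with the exact evaluation of the Poisson–type integral rather than a crude maximum; the choice $\sigma^2=r-r^2+b^2$ (placing the contour on the sphere $\|\cdot\|=\sqrt r$) is precisely what produces the clean $\sqrt r$–dependence. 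The point requiring care is uniformity in $v$: because the bound before simplification is monotone in $b=|\langle z,v\rangle|\le r$, the extremal direction is $v$ parallel to $z$, and the inequality $\sigma\le\sqrt r$ shows the displayed choice already absorbs every other direction, so that no direction-by-direction optimization is needed.
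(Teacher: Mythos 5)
Your proof is correct, and it reaches the paper's bound by a genuinely different route. The paper first rescales: by the growth theorem $\|f(\zeta)\|\le\|\zeta\|/(1-\|\zeta\|)^2$, the map $F(\zeta):=\rho^{-1}(1-\rho)^2f(\rho\zeta)$ is a holomorphic self-map of $\Bn$, to which it applies the Schwarz--Pick-type derivative estimate $\|\di{F}{\zeta}\|\le(1-\|\zeta\|^2)^{-1}$ (quoted from Hamada--Kohr, cf.\ Chen--Gauthier), and then optimizes by setting $\rho=\sqrt r$. You instead use only the growth theorem together with a one-variable Cauchy integral along the affine complex line $\eta\mapsto z_c+\eta v$, choosing the contour $|\eta|=\sigma$ with $\sigma^2=r-r^2+b^2$ so that it sits on the sphere $\|\cdot\|=\sqrt r$, where the growth bound is the constant $\sqrt r/(1-\sqrt r)^2$; the exact Poisson-kernel identity $\oint_{|\eta|=\sigma}|\eta-\beta|^{-2}|d\eta|=2\pi\sigma/(\sigma^2-b^2)$ and the inequality $\sigma\le\sqrt r$ then give $(1-\sqrt r)^{-2}(1-r)^{-1}=(1+\sqrt r)^2(1-r)^{-3}$, exactly the paper's constant (in both arguments the $\sqrt r$ arises from the same optimization, packaged differently). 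All the verifications needed ($b<\sigma<R$, $\di{f}{z}v=G'(\beta)$, $\sigma^2-b^2=r(1-r)$) check out. What your approach buys is self-containedness: you avoid invoking the several-variables derivative estimate for self-maps of the ball, in effect re-deriving the case of it that is needed via an elementary slice computation; the paper's proof is shorter but only modulo that cited lemma.
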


Then, choosing  a (non-normal, in the sense of geometric function theory) function $g$, we obtain an example of a function in $S_R(\B^2)$ such that  the  growth of the differential is faster than predicted for maps in $A_0(\B^2)\circ S^0(\B^2)$.

\bigskip

The following question remains open:

\begin{question}
Is it true that $S_R(\B^n)\subset U_0(\C^n)\circ S^0(\B^n)$? Equivalently, due to Theorem~\ref{t1}, is it true that every $f\in S_R(\B^n)$ embeds into some Loewner chain, possibly with range different from~$\C^n$?
\end{question}

\medskip

The thoughts on which this work is based, germinated in Cluj in 2015 when the first author was visiting Gabriela Kohr. We separated with the promise to continue to work on this subject together. As often happens, years passed by,  the material left in a drawer, and, when the sad news of the premature departure of Gabi arrived, we could only realize that the time was over and we could never benefit again of the amazing ideas of Gabi. We can just thank Gabi for  sharing with us her enthusiasm, her deep intuitions, her constant support and many years of friendship.

The authors also warmly thank Mihai Iancu for very fruitful discussions on the subject.

\section{The class of shearing maps in $S_R(\B^2)$}

Fix a holomorphic function $g:\D\to\C$ with $g(0)=g'(0)=0$. Let  $f:\B^2\to\C^2$ be given by
$$f(z):=(z_1+g(z_2),z_2),\;z:=(z_1,z_2)\in\B^2.$$
Then $f\in S(\B^2)$. If $g$ is an entire function, then $f$ extends
to an automorphism of $\C^2$.

\begin{proposition}
\label{p4}
$f\in S_R(\B^2)$.
\end{proposition}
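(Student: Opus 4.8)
The plan is to prove that $\Omega:=f(\B^2)$ is Runge by verifying the polynomial-hull criterion: for every compact $K\subset\Omega$ its polynomial hull $\widehat K$ (taken in $\C^2$) is a compact subset of $\Omega$. Once this is established, the Oka--Weil approximation theorem applies to each $h\in\mathcal O(\Omega)$ on the compact polynomially convex set $\widehat K$ (of which $\Omega$ is a neighborhood), yielding polynomials converging to $h$ uniformly on $K$; hence polynomials are dense in $\mathcal O(\Omega)$, i.e. $\Omega$ is Runge.

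First I would identify $\Omega$ explicitly. Writing $(u,v)=f(z_1,z_2)=(z_1+g(z_2),z_2)$ and inverting gives $z_2=v$, $z_1=u-g(v)$, so
\[
\Omega=\big\{(u,v)\in\C^2:\ |u-g(v)|^2+|v|^2<1\big\}.
\]
In particular $\Omega\subset\{|v|<1\}$, and $\phi(u,v):=|u-g(v)|^2+|v|^2$ is defined, continuous, and plurisubharmonic on the slab $\{|v|<1\}$, with $\Omega=\{\phi<1\}$. If $g$ were \emph{entire}, the statement would follow at once: then $\phi=\|H\|^2$ with $H=(u-g(v),v)$ a global polynomial map, and for any compact $K$ the elementary facts that $H(\widehat K)\subseteq\widehat{H(K)}$ and that a polynomial hull lies in the closed convex hull (the Euclidean ball being convex) give $\widehat K\subseteq\{\phi\le\max_K\phi\}$; since $\max_K\phi<1$ this yields $\widehat K\subset\Omega$.

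The main obstacle is that $g$ is only holomorphic on $\D$, so $\phi$ is not a global plurisubharmonic function and the clean argument above does not apply directly. To circumvent this I would first confine the hull to the region where $g$ is available: since $v$ is the restriction of a coordinate polynomial, $\max_{\widehat K}|v|=\max_K|v|=:r_0<1$, so $\widehat K\subseteq\{|v|\le r_0\}$. On the compact disk $\{|v|\le r_0\}$ the function $g$ is holomorphic, hence uniformly approximable there by a polynomial $p$ with $\delta:=\max_{|v|\le r_0}|g-p|$ as small as desired. Setting $\psi(u,v):=|u-p(v)|^2+|v|^2=\|\widetilde H\|^2$ with $\widetilde H=(u-p(v),v)$ a \emph{polynomial} map, the previous paragraph's argument now applies to $\psi$ and gives $\widehat K\subseteq\{\psi\le\max_K\psi\}$.

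Finally I would track the $\delta$'s. With $c:=\max_K\phi<1$ and $M:=\max_K|u-g(v)|$, a triangle-inequality estimate on $K$ gives $\max_K\psi\le c+2\delta M+\delta^2$; and a symmetric estimate on $\widehat K$ (where $|v|\le r_0$, so $|g-p|\le\delta$) converts the bound $\psi\le\max_K\psi$ back into $\max_{\widehat K}\phi\le c+O(\delta)$. Since the right-hand side tends to $c<1$ as $\delta\to0$, choosing $\delta$ small enough forces $\max_{\widehat K}\phi<1$, i.e. $\widehat K\subset\Omega$; as $\widehat K$ is also bounded, it is a compact subset of $\Omega$, which is exactly the hull condition, and Oka--Weil concludes. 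The only real care needed is the bookkeeping of these elementary estimates so that the final level stays strictly below $1$.
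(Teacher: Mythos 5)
Your proof is correct, but it takes a genuinely different route from the paper. The paper's proof is a two-line reduction to results quoted from Arosio--Bracci--Wold: truncating the Taylor series of $g$ gives polynomial shears $f_m$ which extend to automorphisms of $\C^2$ and hence lie in $S_R(\B^2)$, and one concludes because $S_R(\B^2)$ is closed in $H(\B^2)$ under locally uniform convergence. You instead verify directly that $\Omega=f(\B^2)$ is polynomially convex as an open set (every compact $K\subset\Omega$ has $\widehat K$ compactly contained in $\Omega$) and invoke Oka--Weil; the key steps --- confining $\widehat K$ to $\{|v|\le r_0\}$ via the coordinate polynomial $v$, replacing $g$ by a polynomial approximant $p$ there so that $\psi=\|\widetilde H\|^2$ becomes the squared norm of a polynomial map, using $\widehat K\subseteq\overline{\mathrm{conv}}(K)$ pushed forward under $\widetilde H$, and the $\delta$-bookkeeping to return from $\psi$ to $\phi$ --- all check out (note that $M\le 1$ automatically since $K\subset\Omega$, which also bounds $|u-p(v)|$ on $\widehat K$ by $1+\delta$). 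What your approach buys is self-containedness: it proves the slightly stronger statement that $\Omega$ is polynomially convex and avoids appealing to the nontrivial closedness of $S_R(\B^2)$, at the cost of the elementary but fiddly estimates; in effect your hull computation is the concrete content hiding behind the paper's citation. Both arguments ultimately rest on the same idea, namely polynomial approximation of $g$ on compact subdisks of $\D$.
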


\begin{proof}
Consider the Taylor expansion for $g$:
$g(\zeta)=\sum_{k=2}^{\infty} a_k \zeta^k$,
$\zeta\in\D$.
For every $m\in\mathbb{N}$, $m\ge2$, let
$g_m(\zeta):=\sum_{k=2}^m a_k \zeta^k$, $\zeta\in\C$, and
$f_m(z):=(z_1+g_m(z_2),z_2),\;z=(z_1,z_2)\in\C^2$. Since, for every
$m\in\mathbb{N}$, $f_m$ is an automorphism of $\C^2$, we deduce that
$f_m\big|_{\B^2}\in S_R(\B^2)$ (see \cite{ABW}). Thus $f\in S_R(\B^2)$,
because $S_R(\B^2)$ is closed in~$H(\B^2)$ (see \cite{ABW}).
\end{proof}

\begin{proposition}
\label{p5}
Assume that $g(\zeta)=\sum_{k=2}^{\infty} a_k \zeta^k$,
$\zeta\in\D$, and $\sum_{k=2}^{\infty} k|a_k|<\infty$. Then
$f$ embeds into a Loewner chain with range $\C^2$. In particular, $f\in
 S^1(\B^2)$.
\end{proposition}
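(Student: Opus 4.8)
The plan is to prove the slightly stronger statement that $f\in A_0(\C^2)\circ S^0(\B^2)$; the conclusion then follows at once. Indeed, suppose we can write $f=\Phi\circ h$ with $\Phi\in A_0(\C^2)$ and $h\in S^0(\B^2)$, and let $(h_t)_{t\ge0}$ be a normal Loewner chain with $h_0=h$. Then $f_t:=\Phi\circ h_t$ is again a Loewner chain: each $e^{-t}f_t$ is normalized univalent because $\Phi(0)=0$, $\di{\Phi}{0}={\sf Id}$ and $\di{h_t}{0}=e^t{\sf Id}$, while the inclusions $f_s(\B^2)=\Phi(h_s(\B^2))\subseteq\Phi(h_t(\B^2))=f_t(\B^2)$ hold because $\Phi$ is injective. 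Since $f_0=f$ and a normal Loewner chain has range $\C^2$, the range of $(f_t)$ is $\Phi\big(\bigcup_t h_t(\B^2)\big)=\Phi(\C^2)=\C^2$, using that $\Phi$ is an automorphism.

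To produce the splitting I would use the shearing structure and peel off a polynomial part of $g$. Writing $g(\zeta)=\sum_{k\ge2}a_k\zeta^k$, for an integer $N\ge2$ set $G(\zeta):=\sum_{k=2}^N a_k\zeta^k$ and $\gamma:=g-G=\sum_{k>N}a_k\zeta^k$. As $G$ is a polynomial with $G(0)=G'(0)=0$, the map $\Phi(w):=(w_1+G(w_2),w_2)$ is an automorphism of $\C^2$ tangent to the identity, i.e. $\Phi\in A_0(\C^2)$, with inverse $(w_1-G(w_2),w_2)$. Setting $h(z):=(z_1+\gamma(z_2),z_2)$ we have $\Phi\circ h=f$, and $h\in S(\B^2)$ since it is a shearing map and $\gamma(0)=\gamma'(0)=0$. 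Everything thus reduces to choosing $N$ so that $h\in S^0(\B^2)$.

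The key step is to show that, for $N$ large enough, $h$ is starlike, hence lies in $S^0(\B^2)$ by the quoted fact that normalized starlike maps embed into normal Loewner chains. By the standard analytic characterization, $h$ is starlike precisely when $\Re\langle[\di{h}{z}]^{-1}h(z),z\rangle>0$ on $\B^2\setminus\{0\}$. A direct computation gives $[\di{h}{z}]^{-1}h(z)=(z_1+q(z_2),z_2)$ with $q(z_2):=\gamma(z_2)-z_2\gamma'(z_2)=-\sum_{k>N}(k-1)a_k z_2^k$, whence
\[
\Re\big\langle[\di{h}{z}]^{-1}h(z),z\big\rangle=\|z\|^2+\Re\big(q(z_2)\overline{z_1}\big)\ge |z_1|^2-|q(z_2)|\,|z_1|+|z_2|^2 .
\]
As a quadratic in $|z_1|$ this is strictly positive for $z\ne0$ as soon as $|q(z_2)|<2|z_2|$ for $0<|z_2|<1$.

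This is exactly where the hypothesis $\sum_k k|a_k|<\infty$ enters, and it is the only real obstacle. For $|z_2|\le1$ one has $|q(z_2)|\le|z_2|^2\sum_{k>N}(k-1)|a_k|\le C_N\,|z_2|^2$, where $C_N:=\sum_{k>N}(k-1)|a_k|\le\sum_{k>N}k|a_k|\to0$ as $N\to\infty$ by summability. Fixing $N$ with $C_N\le2$ gives $|q(z_2)|\le C_N|z_2|^2<2|z_2|$ for $0<|z_2|<1$, so $h$ is starlike and the construction closes. It is worth noting that the naive dilation chain $f_t=(e^tz_1+g(z_2),e^tz_2)$ does \emph{not} work in general, since its domains fail to be nested for small $t$ when $\sum_k k|a_k|$ is large; extracting the polynomial automorphism $\Phi$ and reducing to a starlike remainder is precisely what circumvents this.
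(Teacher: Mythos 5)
Your proof is correct, and its skeleton coincides with the paper's: you peel off the polynomial part $\sum_{k=2}^N a_k\zeta^k$ into a shearing automorphism $\Phi\in A_0(\C^2)$ and reduce everything to showing that the tail shear $h$ lies in $S^0(\B^2)$ for $N$ large, after which the range-$\C^2$ conclusion follows exactly as you argue. The one step where you genuinely diverge is how you certify $h\in S^0(\B^2)$: the paper invokes a quoted radius-type result (\cite[Lemma 2.2]{GHK-MN}) under the hypothesis $\sum_{k>N}k|a_k|\le1$, whereas you prove directly that $h$ is starlike via the analytic criterion $\Re\big\langle[\di{h}{z}]^{-1}h(z),z\big\rangle>0$ and then use the classical fact that normalized starlike maps belong to $S^0(\B^n)$. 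Your computation is sound: $q(z_2)=-\sum_{k>N}(k-1)a_kz_2^k$, the quadratic-in-$|z_1|$ argument, and the sufficiency of $C_N\le2$ all check out, and in effect you reprove the paper's own Proposition~\ref{p-new1} with a slightly worse constant ($2$ in place of $3\sqrt3/2$), which is harmless since $C_N\to0$ by the summability hypothesis. What your route buys is self-containedness (no dependence on the external radius lemma, only on the embeddability of starlike maps); what the paper's buys is brevity. You also spell out explicitly why every element of $A_0(\C^2)\circ S^0(\B^2)$ embeds into a Loewner chain with range $\C^2$, a point the paper compresses into a citation of Theorem~\ref{t1} together with the fact that normal Loewner chains have range $\C^n$.
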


\begin{proof}
As before, let $g_m(\zeta):=\sum_{k=2}^m a_k \zeta^k$,
$\zeta\in\C$,  and
$f_m(z):=(z_1+g_m(z_2),z_2),\;z=(z_1,z_2)\in\C^2$. Then $f_m$ extends
to an automorphism of $\C^2$ and
$f_m^{-1}(z)=(z_1-g_m(z_2),z_2),\;z=(z_1,z_2)\in\C^2$, for all
$m\in\mathbb{N}$.
Hence $(f_m^{-1}\circ
f)(z)=(z_1+\sum_{k=m+1}^{\infty}a_kz_2^k,z_2),\;z=(z_1,z_2)\in\C^2
$, for all $m\in\mathbb{N}$. Let $N\in\mathbb{N}$ be sufficiently large
such that $\sum_{k=N+1}^{\infty} k|a_k|\le 1$. Then, in view of
\cite[Lemma 2.2]{GHK-MN}, we have that $f_N^{-1}\circ f\in S^0(\B^2)$.
Since $f_N$ is an automorphism of $\C^2$, we conclude that $f$ embeds
into a Loewner chain with range $\C^2$, by Theorem \ref{t1}.
\end{proof}

Denote by $S^*(\Bn)$ the subset of $S(\B^n)$ consisting of all starlike
mappings of $\Bn$.

\begin{proposition}
\label{p-new1}
Assume that $g(\zeta)=\sum_{k=2}^{\infty} a_k \zeta^k$,
$\zeta\in\D$, and $\sum_{k=2}^{\infty} (k-1)|a_k|\leq \frac{3\sqrt{3}}{2}$. Then $f\in
S^*(\B^2)$. In particular, $f\in S^0(\B^2)$.
The constant $\frac{3\sqrt{3}}{2}$ is sharp.
\end{proposition}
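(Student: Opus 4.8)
The plan is to invoke the classical analytic criterion for starlikeness: a normalized, locally biholomorphic map $F$ of $\B^2$ is starlike if and only if $\Re\langle [\di{F}{z}]^{-1}F(z),z\rangle>0$ for all $z\in\B^2\setminus\{0\}$, where $\langle\cdot,\cdot\rangle$ denotes the Hermitian inner product. Our $f$ has Jacobian $\di{f}{z}=\left(\begin{smallmatrix}1 & g'(z_2)\\ 0 & 1\end{smallmatrix}\right)$, which is invertible at every point, so $f$ is locally biholomorphic and the criterion applies. First I would compute
$$[\di{f}{z}]^{-1}f(z)=\big(z_1+g(z_2)-z_2g'(z_2),\,z_2\big),$$
so that, writing $h(\zeta):=g(\zeta)-\zeta g'(\zeta)=-\sum_{k=2}^{\infty}(k-1)a_k\zeta^k$,
$$\Re\big\langle [\di{f}{z}]^{-1}f(z),z\big\rangle=\|z\|^2+\Re\big(h(z_2)\overline{z_1}\big).$$

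Next I would estimate the cross term. Put $M:=\sum_{k=2}^{\infty}(k-1)|a_k|$; since $|\zeta|^k\le|\zeta|^2$ for $k\ge2$ and $|\zeta|\le1$, we get $|h(z_2)|\le M|z_2|^2$, and therefore, with $x:=|z_1|$ and $y:=|z_2|$,
$$\Re\big\langle [\di{f}{z}]^{-1}f(z),z\big\rangle\ge\|z\|^2-|h(z_2)|\,|z_1|\ge x^2+y^2-Mxy^2.$$
The whole question thus reduces to the elementary extremal problem of determining the largest $M$ for which $x^2+y^2>Mxy^2$ holds throughout $\{x,y\ge0,\ x^2+y^2<1\}\setminus\{0\}$. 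Because $(x^2+y^2)/(xy^2)$ is homogeneous of degree $-1$, its infimum over the open disk is approached (but not attained) as one tends to the unit circle; maximizing $xy^2=\cos\theta\sin^2\theta$ there gives $\tan^2\theta=2$ and $\max xy^2=\tfrac{2}{3\sqrt3}$, hence $\inf\,(x^2+y^2)/(xy^2)=\tfrac{3\sqrt3}{2}$. Consequently $M\le\tfrac{3\sqrt3}{2}$ forces the strict inequality on all of $\B^2\setminus\{0\}$, so $f\in S^*(\B^2)$, and then $f\in S^0(\B^2)$ because every starlike map lies in $S^0(\B^2)$.

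To prove sharpness I would test the extremal monomial $g(\zeta)=c\,\zeta^2$ with $c>\tfrac{3\sqrt3}{2}$, for which $M=c$ and $h(\zeta)=-c\,\zeta^2$, so that both estimates above become equalities. Evaluating along the extremal direction, i.e. at $z=r\big(\tfrac{1}{\sqrt3},\sqrt{2/3}\big)$ with $z_1,z_2>0$, one finds
$$\Re\big\langle [\di{f}{z}]^{-1}f(z),z\big\rangle=r^2-\tfrac{2c}{3\sqrt3}\,r^3=r^2\Big(1-\tfrac{2c}{3\sqrt3}\,r\Big),$$
which is negative for $r$ sufficiently close to $1$ exactly when $c>\tfrac{3\sqrt3}{2}$. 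This violates the starlikeness criterion, so $f\notin S^*(\B^2)$, and the constant $\tfrac{3\sqrt3}{2}$ cannot be enlarged.

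The computations are all routine; the one point requiring care is to make the sufficient condition and the counterexample meet at the same constant. This means checking that the coefficient estimate $|h(z_2)|\le M|z_2|^2$ is saturated precisely by a single quadratic term, and that the direction $\tan^2\theta=2$ maximizing $xy^2$ on the sphere is exactly the one along which the monomial example first fails; only then does one obtain the sharp threshold $\tfrac{3\sqrt3}{2}$ rather than a merely sufficient bound.
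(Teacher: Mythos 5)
Your proposal is correct, and the main estimate coincides with the paper's: the same computation $[\di{f}{z}]^{-1}f(z)=(z_1+g(z_2)-z_2g'(z_2),z_2)$, the same reduction to $\|z\|^2+\Re(h(z_2)\overline{z_1})$, and the same homogeneity bound $|z_1||z_2|^2\le\frac{2}{3\sqrt3}\|z\|^3$ coming from maximizing $\cos\theta\sin^2\theta$ (you are in fact slightly more careful than the paper, which ends its chain of inequalities with ``$\ge0$'' whereas the starlikeness criterion requires strict positivity; your observation that $\|z\|<1$ makes the inequality strict is the right way to close that). Where you genuinely diverge is the sharpness claim. The paper does not argue directly with the starlikeness criterion: it cites an external coefficient bound from \cite{B15}, namely that $f\in S^0(\B^2)$ forces $|a_2|\le\frac{3\sqrt3}{2}$ for the quadratic shear, and then uses the inclusion $S^*(\B^2)\subset S^0(\B^2)$ to conclude that $|a_2|>\frac{3\sqrt3}{2}$ rules out starlikeness. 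You instead verify directly that for $g(\zeta)=c\zeta^2$ with $c>\frac{3\sqrt3}{2}$ the quantity $\Re\langle[\di{f}{z}]^{-1}f(z),z\rangle$ becomes negative along the extremal direction $\tan^2\theta=2$ for $r$ close to $1$, and invoke the ``only if'' direction of Suffridge's criterion (valid here since the shear is univalent and locally biholomorphic). Your route is more elementary and self-contained --- it needs no input beyond the criterion already in play --- while the paper's route is shorter on the page and, via \cite{B15}, yields the formally stronger conclusion that such maps fail to lie even in the larger class $S^0(\B^2)$. Both correctly establish sharpness of the constant.
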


\begin{proof}
Since we have
\begin{eqnarray*}
\Re \big\langle [\di{f}{z}]^{-1}f(z), z\big\rangle
&=&\| z\|^2+\Re\sum_{k=2}^{\infty}(1-k)a_kz_2^k\overline{z}_1
\\
&\geq & \| z\|^2-\frac{2}{3\sqrt{3}}\| z\|^2\sum_{k=2}^{\infty}(k-1)|a_k|
\\
&\ge & 0
\end{eqnarray*}
for all $z\in \mathbb{B}^2\setminus \{ 0\}$,
$f$ is starlike.

By \cite{B15}, if $f\in S^0(\B^2)$, then $|a_2|\leq \frac{3\sqrt{3}}{2}$. In particular, the map $(z_1,z_2)\mapsto(z_1+a_2z_2^2, z_2)$ does not belong to $S^\ast(\B^2)$ if $|a_2|>\frac{3\sqrt{3}}{2}$. Hence, $\frac{3\sqrt{3}}{2}$ is sharp.
\end{proof}

\begin{proposition}
\label{p6}
If $f\in S^*(\B^2)$, then $g$ is bounded.
\end{proposition}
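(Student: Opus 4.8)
The plan is to extract quantitative information from the analytic characterization of starlikeness already used in the proof of Proposition~\ref{p-new1}. Write $g(\zeta)=\sum_{k\ge2}a_k\zeta^k$ and set
\[
h(\zeta):=g(\zeta)-\zeta g'(\zeta)=\sum_{k=2}^\infty(1-k)a_k\zeta^k .
\]
With this notation the computation in that proof reads
\[
\Re\big\langle[\di{f}{z}]^{-1}f(z),z\big\rangle=\|z\|^2+\Re\big(h(z_2)\overline{z_1}\big),\qquad z=(z_1,z_2)\in\B^2 ,
\]
and starlikeness of $f$ makes the right-hand side nonnegative throughout $\B^2$. First I would freeze $z_2=\zeta$, $|\zeta|<1$, and read the inequality as a constraint on $z_1$ ranging over the disk $|z_1|<\rho$, where $\rho:=\sqrt{1-|\zeta|^2}$.

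For fixed $\zeta$ the expression is $|z_1+h(\zeta)/2|^2+|\zeta|^2-|h(\zeta)|^2/4$, and the next step is to minimize it over $|z_1|<\rho$. I will restrict attention to $|\zeta|\ge1/\sqrt2$, where $\rho\le|\zeta|$ and $2\rho\le1/\rho$. If the center $-h(\zeta)/2$ lies outside the disk (i.e.\ $|h(\zeta)|\ge2\rho$), the infimum over the disk equals $1-|h(\zeta)|\rho$, whose nonnegativity gives $|h(\zeta)|\le1/\rho$; if instead $|h(\zeta)|<2\rho$, then already $|h(\zeta)|<2\rho\le1/\rho$. In both cases one obtains the pointwise bound
\[
|h(\zeta)|\le\frac{1}{\sqrt{1-|\zeta|^2}},\qquad \tfrac1{\sqrt2}\le|\zeta|<1 .
\]

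The final and decisive step is to recover $g$ from $h$. Since $g(\zeta)=\zeta G(\zeta)$ with $G(\zeta):=\sum_{k\ge2}a_k\zeta^{k-1}$ holomorphic on $\D$ and $G(0)=0$, one has $G'(\zeta)=-h(\zeta)/\zeta^2$, whence, for $|\zeta|\ge1/\sqrt2$,
\[
|G'(\zeta)|=\frac{|h(\zeta)|}{|\zeta|^2}\le\frac{2}{\sqrt{1-|\zeta|^2}} .
\]
Although this control of $G'$ degenerates as $|\zeta|\to1$, the bound is integrable along radii: integrating $|G'|$ along a radius from $|\zeta|=1/\sqrt2$ and using $\int dr/\sqrt{1-r^2}=\arcsin r$ yields
\[
|G(\zeta)|\le\max_{|w|=1/\sqrt2}|G(w)|+2\Big(\tfrac\pi2-\tfrac\pi4\Big)=\max_{|w|=1/\sqrt2}|G(w)|+\tfrac\pi2
\]
uniformly for $1/\sqrt2\le|\zeta|<1$; combined with the obvious bound on the compact disk $\{|\zeta|\le1/\sqrt2\}$, this shows that $G$, and hence $g=\zeta G$, is bounded on $\D$. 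I expect the integration in this last step to be the crux: the pointwise estimate for $h$ alone only yields $|h(\zeta)|\le(1-|\zeta|^2)^{-1/2}\to\infty$, so boundedness of $g$ cannot be read off directly and hinges on the fact that this rate of blow-up has a bounded radial primitive.
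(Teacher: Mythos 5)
Your proof is correct, but it follows a genuinely different route from the paper's. The paper works directly with the geometric definition of starlikeness: for each $\alpha\in(0,1]$ the point $\alpha f(z)$ must have a preimage $z'\in\B^2$, and writing out $\|z'\|<1$ yields the constraint $|z_1+g(z_2)-\tfrac1\alpha g(\alpha z_2)|^2+|z_2|^2<\tfrac1{\alpha^2}$; since $g(\alpha_0\,\cdot\,)$ is bounded on $\D$ for fixed $\alpha_0<1$, unboundedness of $g$ violates this at $z=(0,\zeta)$. That argument is shorter and uses nothing beyond the definition of a starlike domain. You instead invoke the necessity half of the analytic characterization $\Re\langle[\di{f}{z}]^{-1}f(z),z\rangle\ge0$ (the paper only uses the sufficiency half, in Proposition~\ref{p-new1}; the necessity is standard but deserves a citation, e.g.\ to \cite{GK}), and your subsequent steps all check out: the completion of the square, the minimization over the disk $|z_1|<\rho$ giving $|g(\zeta)-\zeta g'(\zeta)|\le(1-|\zeta|^2)^{-1/2}$ for $|\zeta|\ge1/\sqrt2$, the identity $G'(\zeta)=-h(\zeta)/\zeta^2$ for $G=g(\zeta)/\zeta$, and the observation that $(1-s^2)^{-1/2}$ has bounded radial primitive $\arcsin s$. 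What your approach buys is a quantitative pointwise estimate on $g-\zeta g'$ and hence an explicit bound on $\sup_\D|g|$ in terms of the restriction of $g$ to a compact disk, whereas the paper's argument is purely qualitative; the price is the extra machinery (the analytic starlikeness criterion plus the integration step), and you correctly identify that the integration is the crux, since the pointwise bound on $h$ alone blows up at the boundary.
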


\begin{proof}
First, we prove the following: if $f\in S^*(\B^2)$, then, for every
$\alpha\in(0,1]$ and $z=(z_1,z_2)\in\B^2$, we have
\begin{equation}
\label{eq1}\big|z_1+g(z_2)-\frac{1}{\alpha}g(\alpha
z_2)\big|^2+|z_2|^2<\frac{1}{\alpha^2}.\end{equation}

Indeed, if $f(\B^2)$ is a starlike domain with respect to the origin, then, for
every $\alpha\in(0,1]$ and $z=(z_1,z_2)\in\B^2$, there exists
$z'=(z_1',z_2')\in\B^2$ such that $f(z')=\alpha f(z)$, i.e.
$z_2'=\alpha z_2$ and
$z_1'=\alpha z_1+\alpha g(z_2)-g(\alpha z_2)$. Rewriting the condition
$|z_1'|^2+|z_2'|^2<1$, we deduce $(\ref{eq1})$.

Now, suppose that $g$ is not bounded and fix $\alpha_0\in (0,1)$. We
deduce that
$\sup_{\zeta\in\D}\big|g(\zeta)-\frac{1}{\alpha_0}g(\alpha
_0 \zeta)\big|=\infty$. Hence $(\ref{eq1})$ does not hold for
$z=(0,\zeta)$, $\zeta\in\D$, and thus $f$ is not starlike.
\end{proof}

\begin{definition}[see \cite{And-Lemp, ABW2}]
\label{d2}
A domain $D\subset\Cn$ is called a starshapelike domain if there is
an automorphism $\psi:\Cn\to\Cn$  such that $\psi(D)$ is a starlike
domain with respect to the origin.
A mapping $f\in S(\Bn)$ is called starshapelike if $f(\Bn)$ is a
starshapelike domain.
\end{definition}

Using similar arguments as in the case of Propositions $\ref{p5}$ and
$\ref{p-new1}$, we deduce the following result.

\begin{proposition}
\label{p7}
If $\,\sum_{k=2}^{\infty} k|a_k|<\infty$, then $f$ is
starshapelike.
\end{proposition}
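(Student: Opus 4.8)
The plan is to exhibit an explicit automorphism $\psi$ of $\C^2$ for which $\psi(f(\B^2))$ is starlike with respect to the origin, which is precisely what Definition~\ref{d2} requires. The natural candidate for $\psi$ is the inverse of a polynomial truncation of $f$, reusing the splitting already employed in the proof of Proposition~\ref{p5}: I would absorb the ``bad'' low-order part of the shearing function into an automorphism of $\C^2$, leaving a tail small enough to invoke the sharp starlikeness criterion of Proposition~\ref{p-new1}.

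Concretely, for $N\in\N$, $N\ge2$, set $g_N(\zeta):=\sum_{k=2}^N a_k\zeta^k$ and $f_N(z):=(z_1+g_N(z_2),z_2)$. Since $g_N$ is a polynomial, $f_N$ is an automorphism of $\C^2$ with $f_N^{-1}(z)=(z_1-g_N(z_2),z_2)$, and a direct computation gives
$$(f_N^{-1}\circ f)(z)=\Big(z_1+\sum_{k=N+1}^{\infty}a_k z_2^k,\;z_2\Big),\qquad z=(z_1,z_2)\in\B^2.$$
Thus $f_N^{-1}\circ f$ is again a shearing map, whose shearing function is the tail $\tilde g_N(\zeta):=\sum_{k=N+1}^{\infty}a_k\zeta^k$.

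The key step is the choice of $N$. Because $\sum_{k=2}^{\infty}k|a_k|<\infty$ by hypothesis, the tail sums satisfy $\sum_{k=N+1}^{\infty}(k-1)|a_k|\le\sum_{k=N+1}^{\infty}k|a_k|\to0$ as $N\to\infty$; hence I may fix $N$ so large that $\sum_{k=N+1}^{\infty}(k-1)|a_k|\le\frac{3\sqrt3}{2}$. Applying Proposition~\ref{p-new1} to the shearing map $f_N^{-1}\circ f$ (which is normalized and univalent on $\B^2$, since both $f$ and $f_N^{-1}$ fix the origin with identity differential there), I conclude that $(f_N^{-1}\circ f)|_{\B^2}\in S^*(\B^2)$, i.e. $(f_N^{-1}\circ f)(\B^2)$ is a starlike domain with respect to the origin.

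To finish, I set $\psi:=f_N^{-1}$, an automorphism of $\C^2$. Then $\psi(f(\B^2))=(f_N^{-1}\circ f)(\B^2)$ is starlike with respect to the origin, so $f(\B^2)$ is starshapelike and $f$ is starshapelike, as desired. I do not expect any genuine obstacle: the argument is a direct combination of the automorphism/truncation trick of Proposition~\ref{p5} with the starlikeness criterion of Proposition~\ref{p-new1}, the only point deserving care being the verification that the polynomial part $g_N$ can be entirely absorbed into $\psi$ while leaving a tail whose coefficients meet the threshold $\frac{3\sqrt3}{2}$.
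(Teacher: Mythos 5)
Your proof is correct and is exactly the argument the paper intends: the paper only remarks that Proposition~\ref{p7} follows ``using similar arguments as in the case of Propositions~\ref{p5} and~\ref{p-new1}'', and your write-up is precisely that combination --- absorb the truncation $f_N$ into the automorphism $\psi=f_N^{-1}$ and apply the starlikeness criterion of Proposition~\ref{p-new1} to the tail. No gaps.
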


\section{The proof of Theorem~\ref{main}}

We start by proving the new growth estimate on the differential of functions in $S^0(\B^n)$, which we stated in the Introduction.

\begin{proof}[Proof of Proposition~\ref{dif-ineq}]
Let $\rho\in(0,1)$ be arbitrary. Since $\|f(\zeta)\|\le
\frac{\|\zeta\|}{(1-\|\zeta\|)^2}$ for all $\zeta\in\Bn$ (see e.g.
\cite{GHK-CJM}), we see that 
$F(\zeta):=\rho^{-1}(1-\rho)^2f(\rho\zeta)$, $\zeta\in\Bn$, is
a holomorphic self-map of~$\Bn$. In view of \cite[Theorem~4.6]{HK} (cf.
\cite[Lemma 3]{ChGa}), we deduce that:
$$\big\|\di{F}{\zeta}\big\|\le \frac{1}{1-\|\zeta\|^2},\quad
\zeta\in\Bn.$$

After elementary computations, we get:
\begin{equation}
\label{dif-ineq-e}
\big\|\di{f}{\rho\zeta}\big\|\le
\frac{1}{(1-\rho)^2(1-\|\zeta\|^2)},\quad \zeta\in\Bn.
\end{equation}

Let $r\in(0,1)$. Substituting $\sqrt{r}$ for $\rho$ in
(\ref{dif-ineq-e}), we obtain:
$$\big\|\di{f}{\sqrt{r}\zeta}\big\|\le
\frac{1}{(1-\sqrt{r})^2(1-r)},\quad \|\zeta\|\le \sqrt{r},$$
and thus,  replacing $\sqrt{r}\zeta$ with $z$, we have:
\begin{equation}\label{EQ_gr-est}
\big\|\di{f}{z}\big\|\le \frac{(1+\sqrt{r})^2}{(1-r)^3},\quad
\|z\|\le r,
\end{equation}
and we are done.
\end{proof}

With the aid of Propositions~\ref{dif-ineq} and~\ref{p4}, we are now ready to prove Theorem~\ref{main}. 
\begin{proof}[Proof of Theorem~\ref{main}] As mentioned in the Introduction, $A_0(\C^2)\circ S^0(\B^2)\subset S_R(\B^2)$. In order to see that the inclusion is strict, consider the shearing map $f:\B^2\to\C^2$ defined by
$$f(z):=(z_1+z_2^2 h(z_2),z_2),\;z=(z_1,z_2)\in\B^2,\quad \text{where~}~h(\zeta):=\exp(i/(1-\zeta)^3),\; \zeta\in\D.$$
According to Proposition~\ref{p4}, $f\in S_R(\B^2)$. Let us show that nevertheless  $f$ does not embed into a Loewner chain with range $\C^2$. By Theorem \ref{t1}, the latter is equivalent to $f\not\in A_0(\C^2)\circ S^0(\B^2)$.
 
Suppose on the contrary that there exists a normalized automorphism
$\Phi:\C^2\to\C^2$ such that $\Phi^{-1}\circ f\in
S^0(\B^2)$.

By elementary computations, we obtain:
$$\di{f}{z}=\frac{1}{(1-z_2)^{4}}
\begin{bmatrix}
 0 & 3iz_2^2h(z_2)  \\[1ex]
 0 & 0  
\end{bmatrix} 
 +
\begin{bmatrix}
1 & 0  \\[1ex]
0 & 1  
\end{bmatrix}
+
\begin{bmatrix}
0 & 2z_2 h(z_2)  \\[1ex]
0 & 0
\end{bmatrix},\quad z=(z_1,z_2)\in\B^2.$$
Observe that:
\begin{equation}
\label{bdd1}
|h(r)|=1,\quad r\in(0,1).
\end{equation}
Therefore, for all $r\in(1/2,1)$,
$$\|\di{f}{0,r}\|\ge \frac{3r^2}{(1-r)^4}-(1+2r)\ge\frac{2r^2}{(1-r)^4}.$$

By \eqref{bdd1}, we have that $r\mapsto f(0,r)$ is bounded on $(0,1)$. Taking into account that $\Phi$ is an automorphism of~$\C^2$, it follows that there exists a compact set $K\subset \C^2$ such that
$\Phi^{-1}(f(0,r))\in K$ for all $r\in(0,1)$. Let
$$C:=\max_{w\in K}\big\|\di{\Phi}{w}\big\|.$$

On the other hand, we have
$$\di{f}{z}=\di\Phi{\Phi^{-1}(f(z))}\,\di{(\Phi^{-1}\circ
f)}{z},\quad z\in\B^2,$$
and hence, we deduce that:
\begin{equation*}
\big\|\di{(\Phi^{-1}\circ f)}{0,r}\big\| \ge
\frac{1}{\big\|\di{\Phi}{\Phi^{-1}(f(0,r))}\big\|}\big\|\di{f}{0,r}\big\| \ge 
\frac{2r^2}{C(1-r)^4},\quad r\in(0,1/2).
\end{equation*}

On the other hand, $\Phi^{-1}\circ f\in S^0(\B^2)$ and hence, by~\eqref{EQ_gr-est},  
$$\big\|\di{(\Phi^{-1}\circ f)}{0,r}\big\|\le\frac{4}{(1-r)^3},\quad
r\in(0,1),$$
and thus we have arrived to an obvious contradiction, which completes the proof.
\end{proof}

\end{document}